\newtheorem{theorem}{Theorem}[section]
\newtheorem{proposition}[theorem]{Proposition}
\theoremstyle{definition}
\newtheorem{definition}[theorem]{Definition}
\renewenvironment{proof}{{\noindent\bf Proof.}}{\hfill $\Box$\par\vskip3mm}
\begin{document}

\title{Embedded annuli and Jones' conjecture}

\author{Douglas J. LaFountain}
\address{Department of Mathematics \\ Western Illinois University}
\email{d-lafountain@wiu.edu}

\author{William W. Menasco}
\address{Department of Mathematics \\ University at Buffalo}
\email{menasco@buffalo.edu}

\begin{abstract}
We show that after stabilizations of opposite parity and braid isotopy, any two braids in the same topological link type cobound embedded annuli.  We use this to prove the generalized Jones conjecture relating the braid index and algebraic length of closed braids within a link type, following a reformulation of the problem by Kawamuro. 
\end{abstract}

\maketitle

\section{Introduction}

\label{sec:intro}

Consider an oriented unknotted braid axis $A$ in $S^3$ whose complement fibers over $S^1$ with oriented disc fibers $\{A(\theta)\}_{\theta\in S^1}$.  For a given oriented topological link type $\mathcal{L}$ with $m$ components we study closed braid representatives $\beta \in \mathcal{L}$ which are embedded in $S^3 \setminus A$ and which transversely intersect the $A(\theta)$-disc fibers positively.  Our first result is the following: 

\begin{proposition}
\label{prop:annuli}
Let $\beta_1, \beta_2 \in \mathcal{L}$ be braided about a common unknotted braid axis in $S^3$.  Then there exists two braids $\hat \beta_1$ and $\hat \beta_2$ whose $m$ components pairwise cobound $m$ embedded annuli, such that $\hat \beta_1$ is obtained from $\beta_1$ via negative stabilizations, and $\hat \beta_2$ is obtained from $\beta_2$ via positive stabilizations and braid isotopy.
\end{proposition}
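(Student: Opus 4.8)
The plan is to construct the $m$ annuli, rather than to find them ready-made: two disjoint representatives of a fixed link type need not cobound embedded annuli in $S^3$ to begin with, so the role of the stabilizations is precisely to \emph{create} the embedded cobordism. The natural starting data is a sequence of elementary moves carrying $\beta_1$ to $\beta_2$. Since $\beta_1,\beta_2\in\mathcal{L}$ are braid-isotopic as links, Markov's theorem (or, more usefully, the Markov theorem without stabilization of Birman--Menasco) gives a chain $\beta_1=\gamma_0\to\gamma_1\to\cdots\to\gamma_k=\beta_2$ in which each step is a braid isotopy, a (de)stabilization, or an exchange-type move. First I would realize each individual step $\gamma_j\to\gamma_{j+1}$ by an embedded ``elementary'' annulus $\mathcal{E}_j$ joining $\gamma_j$ to $\gamma_{j+1}$, arranged to be collar-like and pushed progressively outward from the axis so that consecutive pieces are disjoint except along the shared braid $\gamma_{j+1}$. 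Concatenating the $\mathcal{E}_j$ along the $\gamma_j$ then yields, for each component, a piecewise-smooth annulus from $\beta_1$ to $\beta_2$; this is the candidate surface $\mathcal{A}$ to be straightened.

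Next I would put each $\mathcal{A}$ into general position with respect to the fibration $\{A(\theta)\}$, so that it inherits a singular braid foliation whose leaves are the arcs and circles $\mathcal{A}\cap A(\theta)$. The interior tangencies are elliptic points (local centers, signed $\pm$ according to whether $\mathcal{A}$ is positively or negatively transverse to $A$ nearby) and hyperbolic points (signed saddles), while the two braided boundaries account for the strands crossing each fiber. Because $\mathcal{A}$ is an annulus, $\chi(\mathcal{A})=0$ forces the signed counts of elliptic and hyperbolic points to balance, and this balance is exactly the bookkeeping that will control the number and parity of the stabilizations. I would then run the standard change-of-foliation and destabilization moves: inessential circle leaves and nested $b$-arcs are cleared by isotopies of $\mathcal{A}$ rel $\partial$, while the removal of a valence-one elliptic point --- a boundary-parallel leaf cutting off a single elliptic point --- is realized on $\partial\mathcal{A}$ as an honest destabilization of the boundary braid.

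The crux, and the step I expect to be the main obstacle, is the sign bookkeeping together with the demand that embeddedness and disjointness be maintained throughout. The goal is to segregate the singular points by sign so that every simplification forced along the first boundary appears as a \emph{negative} stabilization of $\beta_1$, while every simplification along the second boundary appears as a \emph{positive} stabilization (with braid isotopy) of $\beta_2$; this is what produces $\hat\beta_1$ and $\hat\beta_2$ with the asserted parities. The $\chi(\mathcal{A})=0$ balance is what makes such a segregation possible in principle, since an excess of one sign on one side must be compensated on the other, and the embedded annulus is precisely the device that lets one trade a destabilization on the $\beta_1$-side for a stabilization of the opposite parity on the $\beta_2$-side. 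The real difficulty is to carry out this trade while keeping the $m$ annuli simultaneously embedded and pairwise disjoint --- monitoring the circle leaves, the nested $a$- and $b$-arcs, and the possible clasp singularities --- so that no two of the $\hat\beta_i$ components or their annuli are forced to intersect. Controlling embeddedness, disjointness, and stabilization parity at once is where essentially all of the work lies.
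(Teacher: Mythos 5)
Your overall strategy --- build a candidate annulus from $\beta_1$ to $\beta_2$, foliate it by the $A(\theta)$-fibration, and simplify --- founders at the very first step, and the difficulty you defer to the end is in fact the whole problem. The concatenation of elementary annuli $\mathcal{E}_j$ traced out by a Markov chain of moves is an \emph{immersed} annulus, not an embedded one: the trace in $S^3$ of even a single braid isotopy can intersect the positions of the link at other times, and crossing changes between a component and itself or another component produce clasp self-intersections that no amount of ``pushing progressively outward from the axis'' removes (if such a disjoining were always possible, the two braids would cobound embedded annuli without any stabilization, contradicting the need for the proposition). Your proposal explicitly acknowledges ``possible clasp singularities'' but supplies no mechanism for resolving them, and the braid foliation machinery you invoke afterwards applies only to embedded surfaces, so the simplification phase never gets off the ground. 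The second gap is the parity bookkeeping: the Euler characteristic balance $\chi(\mathcal{A})=0$ constrains the total count of elliptic and hyperbolic points but says nothing about \emph{which boundary component} absorbs stabilizations of \emph{which sign}; nothing in your sketch forces all simplifications along $\partial_1$ to be negative stabilizations of $\beta_1$ and all those along $\partial_2$ to be positive stabilizations of $\beta_2$.

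The paper's proof sidesteps both problems by never attempting to span $\beta_1$ and $\beta_2$ directly. Instead it takes a braided push-off $\beta'_1$ of $\beta_1$, so that embedded annuli exist for free, and then unlinks $\beta'_1$ from $\beta_1$ by crossing changes; each crossing change is recorded by an embedded bigon disc $D_i$ cobounded by arcs of $\beta'_1$ and its pushed-through copy and pierced exactly once by $\beta_1$. After isotoping the copy to $\beta_2$ and re-braiding via Alexander's theorem, all the work happens on these \emph{discs}, not on the annuli: their foliations are reduced to linear chains of $aa$-tiles, exchange moves sort the singularities so that negative ones stack at the $\beta_1$-end and positive ones at the $\beta_2$-end, destabilizing the disc boundary through the negative singularities induces negative stabilizations of $\beta_1$ (the ``microflype'' of \S2.3 of \cite{[BM2]}), and the remaining positive singularities become positive stabilizations of $\beta_2$, which is then allowed to pass through $\hat\beta_1$ by braid isotopy --- this pass-through, flagged in the remark after the proposition, is exactly how the paper legalizes the crossing changes your immersed annulus cannot avoid. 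If you want to salvage your approach, you would need to replace your concatenated surface with some analogous device that localizes the self-intersections into controllable pieces; as written, the proposal restates the two hard points of the theorem rather than proving them.
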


We remark that this final braid isotopy will in general involve $\hat\beta_2$ passing through $\hat\beta_1$ so as to obtain the embedded annuli.

For any closed braid $\beta$, we denote its braid index by $n(\beta)$ and its algebraic length by $\ell(\beta)$.  We use the above proposition to prove the following theorem, namely the generalized Jones conjecture which relates the braid index and algebraic length of closed braids within a topological link type \cite{[J],[K1]}.

\begin{theorem}[Jones' conjecture]
\label{thm:jones}
Let $\beta, \beta_0 \in \mathcal{L}$ be two closed braids such that $n(\beta_0)$ is minimal for its link type.  Then $$|\ell(\beta)-\ell(\beta_0)| \leq n(\beta) - n(\beta_0)$$

\end{theorem}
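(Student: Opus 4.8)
The plan is to convert the two-sided estimate into a statement about the self-linking number and then feed the embedded annuli of Proposition~\ref{prop:annuli} into it. For a closed braid $\gamma$ set $sl(\gamma)=\ell(\gamma)-n(\gamma)$ and $c(\gamma)=\ell(\gamma)+n(\gamma)$. A positive stabilization raises $\ell$ and $n$ each by one, so it fixes $sl$ and raises $c$ by two; a negative stabilization raises $n$ by one and lowers $\ell$ by one, so it fixes $c$ and lowers $sl$ by two. The inequality $\ell(\beta)-\ell(\beta_0)\le n(\beta)-n(\beta_0)$ is equivalent to $sl(\beta)\le sl(\beta_0)$, while $\ell(\beta_0)-\ell(\beta)\le n(\beta)-n(\beta_0)$ is equivalent to $c(\beta)\ge c(\beta_0)$. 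Passing to the mirror link type replaces $\ell$ by $-\ell$ and fixes $n$, so it interchanges $sl$ and $-c$ and sends $\beta_0$ to a minimal-index representative of the mirror; hence the second inequality is exactly the first applied to the mirror. It therefore suffices to prove $sl(\beta)\le sl(\beta_0)$.

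To prove this I would apply Proposition~\ref{prop:annuli} to the ordered pair $(\beta_0,\beta)$, producing $\hat\beta_1$ from $\beta_0$ by $p\ge 0$ negative stabilizations and $\hat\beta_2$ from $\beta$ by positive stabilizations and braid isotopy, with the components of $\hat\beta_1$ and $\hat\beta_2$ pairwise cobounding embedded annuli. Because positive stabilization and braid isotopy are transverse isotopies, $sl(\hat\beta_2)=sl(\beta)$ exactly; because each negative stabilization drops $sl$ by two, $sl(\hat\beta_1)=sl(\beta_0)-2p$. The asymmetry of the two stabilization parities is essential here: the negative stabilizations on the minimal-index braid are what permit the self-linking numbers to be reconciled across the annuli, and since $\beta_0$ has minimal braid index there is no competing destabilization, so $p\ge 0$ genuinely.

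The heart of the argument is to read off from the embedded annuli that $sl(\hat\beta_1)=sl(\hat\beta_2)$. Each annulus is a cobordism of Euler characteristic zero between two braids about the common axis; putting it in braided position gives a singular foliation by the disc fibers whose elliptic and hyperbolic points are balanced by $\chi=0$, and the Bennequin-type tally over this foliation equates the self-linking numbers of the two boundary braids, just as a transverse isotopy would. It is precisely the embeddedness, rather than mere immersion, of the annuli that makes this tally valid. Granting it, \[ sl(\beta)=sl(\hat\beta_2)=sl(\hat\beta_1)=sl(\beta_0)-2p\le sl(\beta_0), \] which is the required inequality; applying the same reasoning to the mirror link type yields $c(\beta)\ge c(\beta_0)$, and the two combine to give the theorem.

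I expect the main obstacle to be exactly this last equality of self-linking numbers. One must check that the annuli delivered by Proposition~\ref{prop:annuli} can be normalized to braided position without introducing self-linking defects, and that the resulting Euler-characteristic-zero foliation really does force the two boundary self-linking numbers to coincide rather than merely bounding their difference. This is where embeddedness, the opposite parities of the two stabilization sequences, and Kawamuro's reformulation of the problem all enter; the remainder of the deduction is the elementary stabilization bookkeeping recorded above.
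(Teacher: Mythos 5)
Your reduction of the two-sided inequality to the single statement $\ell(\beta)-n(\beta)\le\ell(\beta_0)-n(\beta_0)$ (via the mirror link type for the other side) is correct, and your stabilization bookkeeping is fine; this is equivalent to Kawamuro's cone picture that the paper uses. The fatal step is your central lemma: that two closed braids about a common axis which cobound an embedded annulus must have equal self-linking numbers. That is false. Take the negative Hopf link $K_1\sqcup K_2$ as the closure of the $2$-braid $\sigma_1^{-2}$; it bounds an embedded annulus, and each component satisfies $\ell-n=-1$. Now negatively stabilize the component $K_2$ once. Stabilization is an ambient isotopy of $K_2$, so the link type is unchanged and the two components still cobound an embedded annulus (which now meets the axis --- allowed, since the paper's annuli also intersect the axis in vertices); yet now $\ell(K_1)-n(K_1)=-1$ while $\ell(K_2)-n(K_2)=-3$. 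An embedded annulus pins down only the framing $lk(K_1,K_2)$, not the Seifert-framed self-linking of its boundaries, and the Euler-characteristic balance of vertices and singularities on an annulus does not convert into the equality you need, because positive and negative vertices and singularities distribute unevenly between the two boundary components. Indeed, if your lemma were true, the number $p$ of negative stabilizations in Proposition \ref{prop:annuli} would be forced to equal half the difference of the self-linking numbers of the inputs, which nothing in that construction guarantees. (Your aside that minimality of $n(\beta_0)$ is what makes $p\ge 0$ is also off: $p\ge 0$ is automatic from the statement of Proposition \ref{prop:annuli}; minimality plays no role there.)

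This is precisely the gap the paper's Section \ref{sec:jones} is built to fill, and it does so differently: it never reads invariants off the raw annuli. Instead it simplifies the braid foliation of each annulus $\mathcal{A}$ using the Euler-characteristic lemmas of Birman--Menasco --- eliminating $aa$- and $as$-tiles, then reaching either an annulus foliated entirely by $s$-arcs or a checkerboard tiling by $ab$- and $bb$-tiles --- at the cost of modifying \emph{both} boundary braids by destabilizations (of either parity), exchange moves and braid isotopy. Equality of $(\ell,n)$ is obtained only for the modified braids $\hat\beta_0^*$ and $\hat\beta_1^*$, not for $\hat\beta_0$ and $\hat\beta_1$; the contradiction then comes from the cone geometry, since $\hat\beta_0$ and $\hat\beta_1$ lie in the coinciding cones of $\hat\beta_0^*$ and $\hat\beta_1^*$, whose common apex has braid index strictly less than $n(\beta_0)$, violating minimality. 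Without some replacement for your false annulus lemma --- in practice, without this foliation analysis permitting further destabilization of both braids --- your argument does not close.
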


As noted by others, the veracity of Jones' conjecture yields immediate applications to the study of transverse links in the contact 3-sphere, quasi-positive and strongly quasi-positive braids, representations of braid groups and polynomial invariants for links \cite{[DP],[E],[K1],[K],[S]}.  Recently, Dynnikov and Prasolov provided a proof of Jones' conjecture by studying bypasses for rectangular diagrams representing Legendrian links \cite{[DP]}.  We present here an independent proof using an alternative approach.

The outline of the paper is as follows:  in Section \ref{sec:annuli} we briefly review background, and then prove Proposition \ref{prop:annuli} using braid foliation techniques of Birman-Menasco; in particular, our argument is inspired by the constructions in \cite{[BM3]} and \S2 of \cite{[BM2]}.  In Section \ref{sec:jones} we then establish Theorem \ref{thm:jones} by proving an equivalent statement proposed by Kawamuro \cite{[K1],[K]}. 

\medskip

\noindent\textbf{Acknowledgements.}  The authors would like to thank Joan Birman and Keiko Kawamuro for their helpful comments while reading a preliminary version of the current paper.

\section{Stabilizing to embedded annuli}
\label{sec:annuli} 

Let $\beta$ be braided about an unknotted braid axis $A$ with braid fibration $\{A(\theta)\}_{\theta \in S^1}$.  The {\em braid index} of $\beta$, denoted $n(\beta)$, is the number of intersections of $\beta$ with any $A(\theta)$-disc. The {\em algebraic length} of $\beta$, denoted $\ell(\beta)$, is the sum of the signed crossings in any regular braid projection of $\beta$.  Given $\beta$, we may alter it through standard moves, namely: {\em braid isotopy} in the complement of $A$ which does not change $n$ nor $\ell$; {\em exchange moves} which change neither $n$ nor $\ell$ (see the left side of Figure \ref{fig:basicbraidmovesB}); and {\em stabilization (destabilization)} which increases (decreases) $n$ and either increases or decreases $\ell$ depending on whether the stabilization or destabilization is positive or negative (see the right side of Figure \ref{fig:basicbraidmovesB}).

\begin{figure}[htbp]
	\centering
		\includegraphics[width=0.70\textwidth]{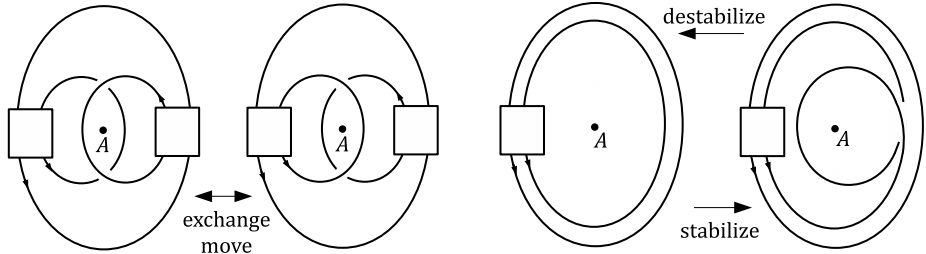}
	\caption{{\small On the left is an exchange move; on the right is stabilization (destabilization).}}
	\label{fig:basicbraidmovesB}
\end{figure}

Throughout this paper we will be studying braids using various embedded oriented annuli and bigon discs.  By general position we may assume that $A$ intersects our surface of interest $S$ transversely in finitely many points, called {\em vertices}, of either positive or negative parity, depending on whether the orientation of $A$ agrees or disagrees with the orientation of $S$ at that vertex.  Provided the boundary components, or boundary arcs, of our surface $S$ are transverse to $A(\theta)$-discs in the braid fibration, by standard braid foliation arguments \cite{[BF]} we may also assume that there are finitely many points of tangency between $S$ and the $A(\theta)$-disc fibers, all of which yield simple saddle singularities in the foliation of $S$ induced by the braid fibration.  We will generically refer to such points as {\em singularities}, and again these will be of positive or negative parity depending on whether the orientation of $S$ agrees or disagrees with the orientation of $A(\theta)$ at those points.

Also by standard arguments \cite{[BF]}, the non-singular leaves in the $A(\theta)$-foliation of our discs and annuli will either be {\em s-arcs} whose endpoints are on two different boundary components of an annulus (or different boundary arcs of a bigon disc); {\em a-arcs} with one endpoint on a vertex and one endpoint on a boundary component or arc; or {\em b-arcs} whose endpoints are on two different vertices of opposite parity.  Furthermore, singular leaves can then be classified as to what non-singular leaves interact to form the saddle singularity; specifically, in general we will have $aa$-, $bb$-, $ab$-, $as$- and $abs$-singularities.  These are depicted in Figure \ref{fig:tilesB}, where the oriented transverse boundary is given by bold black arrows; we will refer to the gray shaded regions as $aa$-, $bb$-, $ab$-, $as$- and $abs$-tiles, classified by the singularity which they contain.   

\begin{figure}[htbp]
	\centering
		\includegraphics[width=0.60\textwidth]{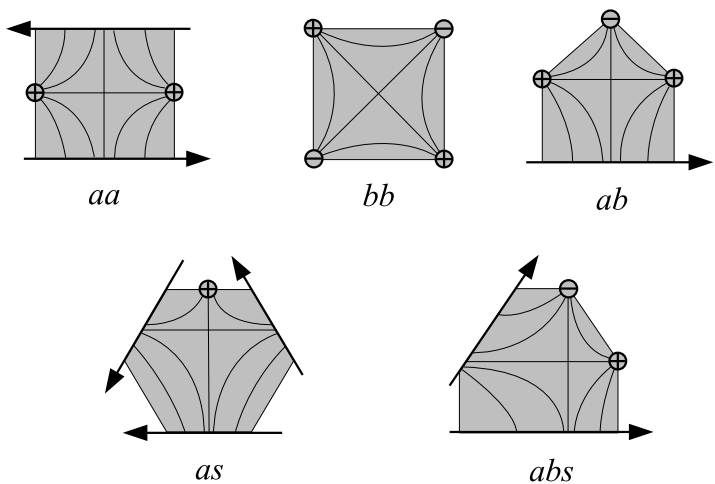}
	\caption{{\small The five types of saddle singularities, classified by the types of non-singular arcs interacting to form the singularity.  The parities of the vertices may be reversed provided the orientations of the transverse boundary arcs are compatibly reversed.}}
	\label{fig:tilesB}
\end{figure}

The {\em valence} of a vertex is the number of singular leaves for which it serves as an endpoint; alternatively the valence is the number of (one-parameter families of) $a$-arcs or $b$-arcs to which the vertex is adjacent.  Any vertex may therefore be labeled as type $(\alpha,\beta)$ where $\alpha$ and $\beta$ are the number of $a$-arcs and $b$-arcs to which the vertex is adjacent, with the valence of the vertex being $\alpha + \beta$. 

As discussed in detail in \cite{[BF]} and shown in Figure \ref{fig:valence123B}, valence-1 vertices of type $(1,0)$ indicate a destabilization of the braided boundary of a surface which results in the elimination of that vertex; valence-2 vertices of type $(1,1)$ or $(0,2)$ indicate an exchange move with a corresponding elimination of two vertices; and valence-3 vertices of type $(0,3)$ indicate the presence of two consecutive singularities of like parity adjacent to that vertex whose $A(\theta)$-order can be interchanged using braid isotopy, thus reducing the valence of that vertex to 2. This re-ordering of consecutive singularities of like parity is referred to as a {\em standard change of fibration} -- see the bottom right picture in Figure \ref{fig:valence123B}.  As a result, using destabilizations, exchange moves and braid isotopy we can reduce the total number of vertices provided there are such vertices of valence-1, -2 or -3.

\begin{figure}[htbp]
	\centering
		\includegraphics[width=0.70\textwidth]{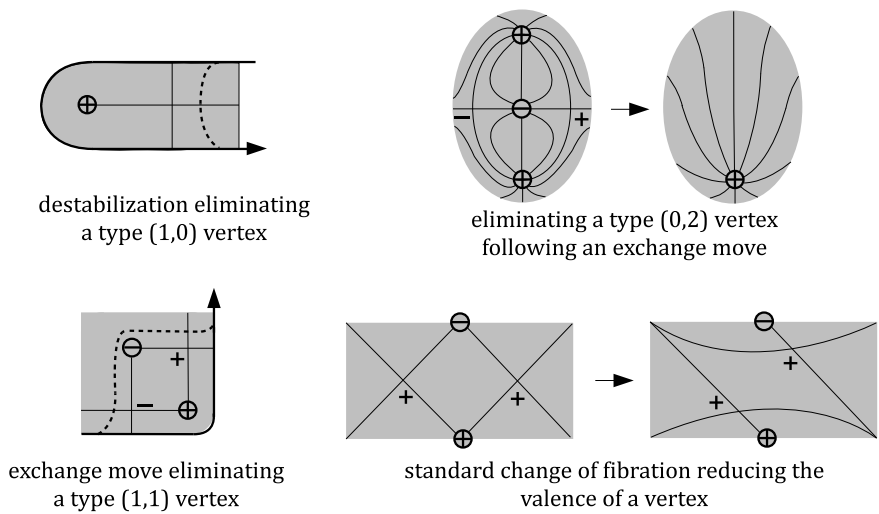}
	\caption{{\small Eliminating, and reducing the valence of, vertices using destabilization, exchange moves and braid isotopy.  In the figures on the left, the braided boundary is indicated by the bold black arrow, and the subdisc cobounded by it and the dashed line is eliminated following destabilization and an exchange move, respectively.}}
	\label{fig:valence123B}
\end{figure}

With this background, we can now proceed with the proof of Proposition \ref{prop:annuli}, namely that given braids $\beta_1, \beta_2 \in \mathcal{L}$, then after negative stabilizations of $\beta_1$ and positive stabilizations of $\beta_2$, we obtain two braids $\hat\beta_1, \hat\beta_2$ which cobound $m$ embedded annuli, one for each component of the link type $\mathcal{L}$.  

\bigskip

\begin{proof}
We consider $\beta_1$ and $\beta_2$ braided about a common axis $A$, and furthermore think of $A$ as the $z$-axis in $\mathbb{R}^3$, with $\beta_1 \subset \mathbb{R}^3_- = \{(x,y,z) \in \mathbb{R}^3 | z < 0\}$ and $\beta_2 \subset \mathbb{R}^3_+ = \{(x,y,z) \in \mathbb{R}^3 | z > 0\}$.  We begin with $\beta_1$ and consider a braided push-off $\beta'_1$ of $\beta_1$ such that $\beta'_1,\beta_1 \subset \mathbb{R}^3_-$ and $\beta'_1 \sqcup \beta_1$ cobound $m$ embedded annuli, one for each component of $\mathcal{L}$.

Now consider a regular braid projection of $\beta'_1 \sqcup \beta_1$ onto the $z=-1$-plane in $\mathbb{R}^3_-$, and wherever there is a double point at which $\beta'_1$ passes under $\beta_1$, we imagine isotoping $\beta'_1$ locally upwards in the $z$-direction through $\beta_1$.  The result is $\beta'_2$ which is in fact braid isotopic to $\beta'_1$, but which is now unlinked from $\beta_1$, yet still in $\mathbb{R}^3_-$.  Furthermore, for each of these $r$ crossing changes, we have a bigon disc $D_i$, $1 \leq i \leq r$, cobounded by arcs of $\beta'_1$ and $\beta'_2$ and intersected once by $\beta_1$; see Figure \ref{fig:passthroughB}.   

\begin{figure}[htbp]
	\centering
		\includegraphics[width=0.75\textwidth]{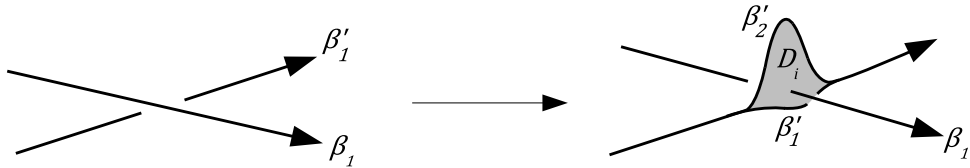}
	\caption{{\small An under-crossing of $\beta'_1$ passes through $\beta_1$ to produce $\beta'_2$, and produces a bigon disc $D_i$ which is intersected once by $\beta_1$.} }
	\label{fig:passthroughB}
\end{figure}

We may now vertically braid isotop $\beta'_2$ in the positive $z$-direction so that $\beta'_2 \subset \mathbb{R}^3_+$.  Then, since $\beta'_2, \beta_2 \in \mathcal{L}$, there is an ambient isotopy of $\mathbb{R}^3_+$, relative to the $xy$-plane, which takes $\beta'_2$ to $\beta_2$.  The braid $\beta'_1$ will experience an induced isotopy to a link $L'_1$ which continues to cobound with $\beta_1$ a total of $m$ embedded annuli.  Similarly, the discs $D_i$ will experience an induced isotopy.  The boundary of each new $D_i$ consists of two arcs, namely $\partial_i^+ \subset \beta_2$ and the boundary arc $\partial_i^- \subset L'_1$; see Figure \ref{fig:isotopyB}.  Observe that the braid $\beta_1$ intersects each disc $D_i$ once close to the $\partial_i^-$ boundary arc in $\mathbb{R}^3_-$.  Since $\partial_i^+ \subset \beta_2$ it is transverse to the $\{A(\theta)\}$-fibration.  However, the $\partial_i^-$ boundary arc will not necessarily be transverse to the $\{A(\theta)\}$-fibration in $\mathbb{R}^3_+$, although it will still be transverse to the portions of $A(\theta)$-discs in $\mathbb{R}^3_-$.

\begin{figure}[htbp]
	\centering
		\includegraphics[width=0.60\textwidth]{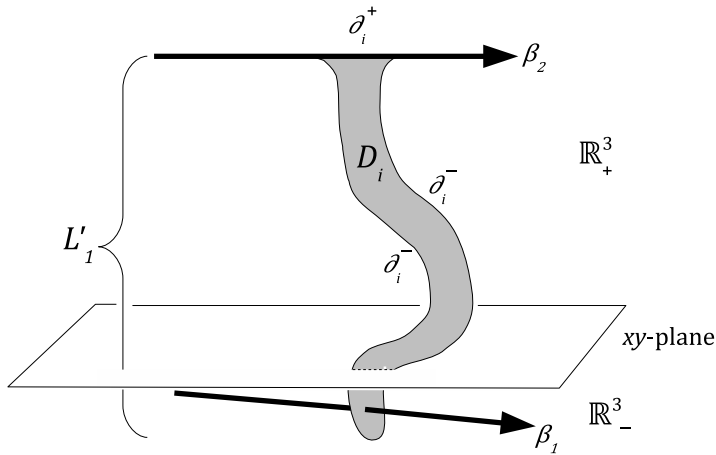}
	\caption{{\small The link $L'_1$ and discs $D_i$ after isotoping $\beta'_2$ to $\beta_2$ in $\mathbb{R}^3_+$.}}
	\label{fig:isotopyB}
\end{figure}

We now apply Alexander's theorem for links in $\mathbb{R}^3_+$ so as to make the $\partial_i^-$ boundary arcs for the discs $D_i$ transverse to the $\{A(\theta)\}$-fibration, and we can do so without changing the conjugacy class of $\beta_2$ (\cite{[A]}, see also \S 2 of \cite{[BM2]}).  This takes the link $L'_1$ to a braid $\beta''_1$ which continues to cobound with $\beta_1$ a total of $m$ embedded annuli.  Moreover, we can now realize bigon discs $D_i$ whose braid foliations may be assumed to consist of bands of $s$-arcs alternating with regions tiled by $aa$-, $ab$- and $bb$-tiles, with either a single $as$-tile serving as a transition between the $s$-band and the tiled region, or two $abs$-tiles serving as the transition on either end of a bigon disc.  We orient each $D_i$ so as to agree with the orientation of the $\partial_i^-$ boundary arc, so that $a$-arcs along $\partial_i^-$ connect to positive vertices, and $a$-arcs along $\partial_i^+$ connect to negative vertices; we then work to simplify the foliation of each $D_i$.  

First, consider all singular leaves which intersect the $\partial_i^-$ boundary arc; by slightly perturbing $D_i$ if necessary we may assume $\beta_1$ does not intersect any such singular leaves.  If any of these singular leaves has its other endpoint on a vertex $v$, we may stabilize the $\partial_i^-$ arc along that singular leaf so as to remove that vertex $v$ from the foliation of $D_i$ (see Figure \ref{fig:stabilizesingB}).

\begin{figure}[htbp]
	\centering
		\includegraphics[width=0.65\textwidth]{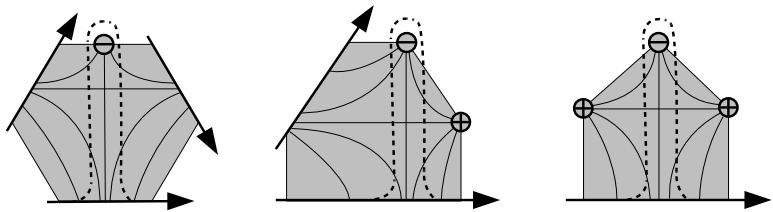}
	\caption{{\small We can stabilize $\partial_i^-$ along singular leaves if any of these three configurations occur.}}
	\label{fig:stabilizesingB}
\end{figure}

Thus we may assume that all singular leaves which intersect the $\partial_i^-$ boundary arc in fact intersect it twice.  As a result, if we consider the graph in $D_i$ which consists of singular leaves that connect vertices to vertices, or vertices to one of the boundary arcs  $\partial_i^-$ or $\partial_i^+$, this graph must be a forest, with the root of each tree on the $\partial_i^+$ boundary arc.  In other words, the foliation of each $D_i$ consists of a band of $s$-arcs with trees of $aa$-tiles extending off of it along the $\partial_i^-$ boundary arc, with a single $as$-tile connecting each tree of the forest to the band of $s$-arcs -- see the left-hand side of Figure \ref{fig:treeB}.

\begin{figure}[htbp]
	\centering
		\includegraphics[width=0.65\textwidth]{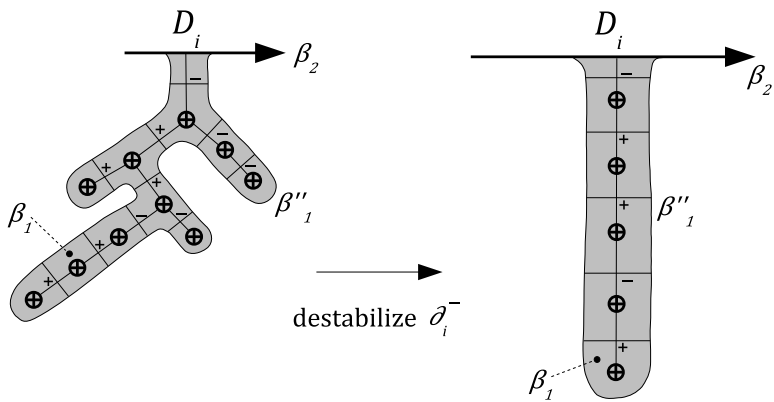}
	\caption{{\small On the left is $D_i$ whose vertex-singularity graph is a tree after stabilizing $\partial_i^-$; on the right is $D_i$ whose graph is a linear tree after destabilizing $\partial_i^-$.}}
	\label{fig:treeB}
\end{figure}

We can now destabilize the $\partial_i^-$ boundary arc along each outermost $aa$-tile containing a valence-1 vertex, as long as it does not contain the point of intersection with $\beta_1$; the result is that each $D_i$ may be assumed to be a linear string of $aa$-tiles as in the right-hand side of Figure \ref{fig:treeB}, where the outermost tile farthest from $\beta_2$ contains the intersection with $\beta_1$.  Observe that throughout this simplification of the foliation, both $\beta_2$ and $\beta_1$ are fixed, and the resulting braid $\beta''_1$ continues to cobound with $\beta_1$ a total of $m$ embedded annuli.

We now examine the resulting linear foliation on a single $D_i$; it consists of positive vertices, along with a sequence of singularities.  The result is a twisted band, with the parity of the half-twists given by the signs of the singularities.  It is then evident that if anytime a negative singularity is consecutive with a positive singularity, we may perform an exchange move so as to re-order those two singularities -- see Figure \ref{fig:exchangesB}.  Moreover, this exchange move involves an isotopy of $D_i$ which is performed in a regular neighborhood of the sub-disc of $D_i$ cobounded by $\partial_i^-$ and the singular leaves associated with the two singularities; thus the isotopy fixes both $\beta_1$, $\beta_2$ and the other $D_i$.

\begin{figure}[htbp]
	\centering
		\includegraphics[width=0.50\textwidth]{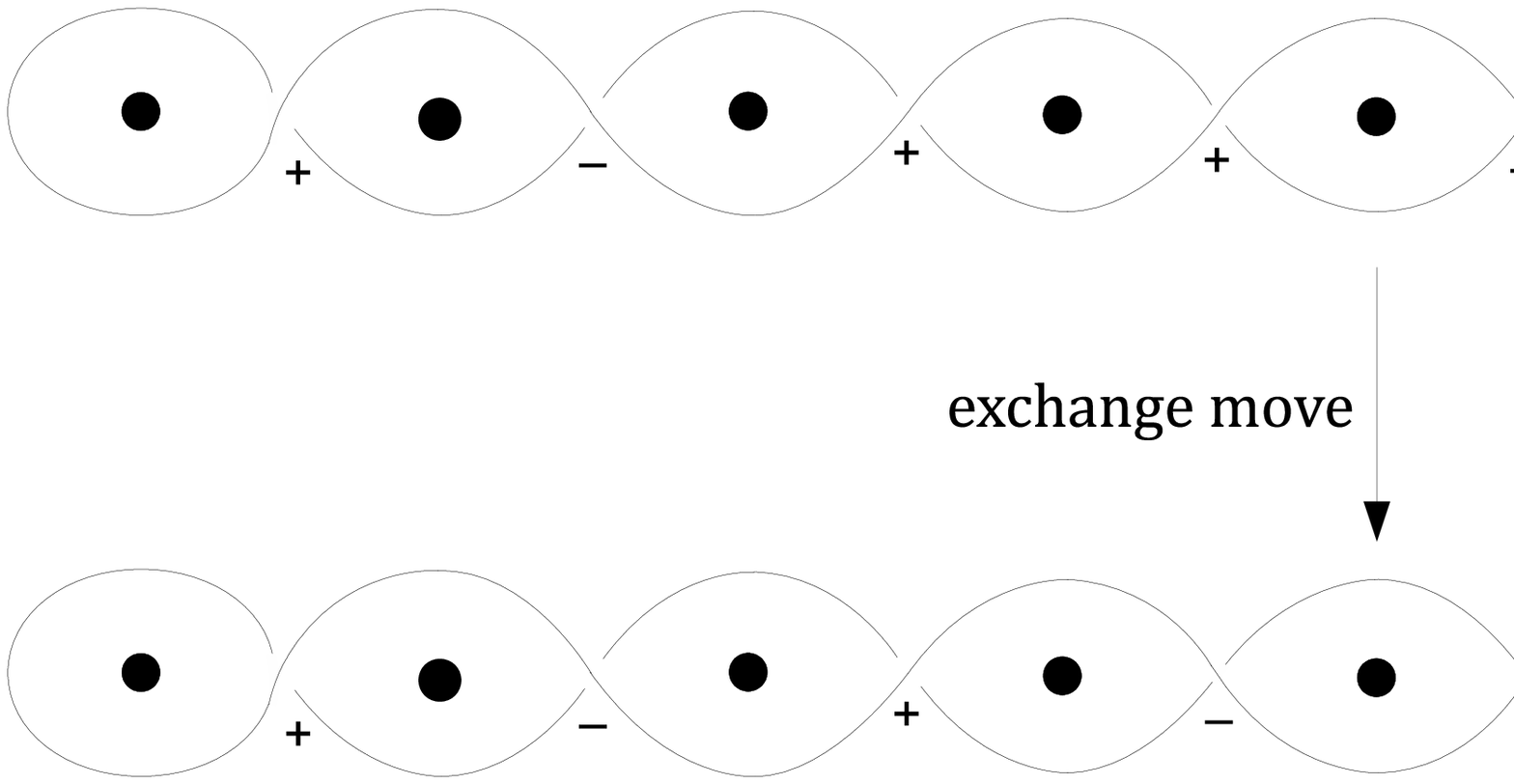}
	\caption{{\small An exchange move involving the two consecutive singularities closest to the $\beta_2$-end of the $D_i$ linear tree.  The black dots represent intersections of the braid axis $A$ with $D_i$.}}
	\label{fig:exchangesB}
\end{figure}

In this way, we may arrange that all positive singularities are stacked at the $\beta_2$-end of the foliation of $D_i$, and all negative singularities are stacked at the $\beta_1$-end of the foliation of $D_i$.  We then can negatively destabilize $\partial_i^-$ through those singularities, therefore inducing negative stabilizations of $\beta_1$ to arrive at $\hat\beta_1$, as depicted in Figures  \ref{fig:destabstabB} and \ref{fig:microflypeB}.  (See `microflypes' in \S2.3 of \cite{[BM2]}.)  We then observe that stabilizing $\beta_2$ along the remaining positive singularities and allowing it to pass through $\hat\beta_1$ will yield $\hat\beta_2$ which cobounds with $\hat\beta_1$ a total of $m$ embedded annuli.
\end{proof}

\begin{figure}[htbp]
	\centering
		\includegraphics[width=0.75\textwidth]{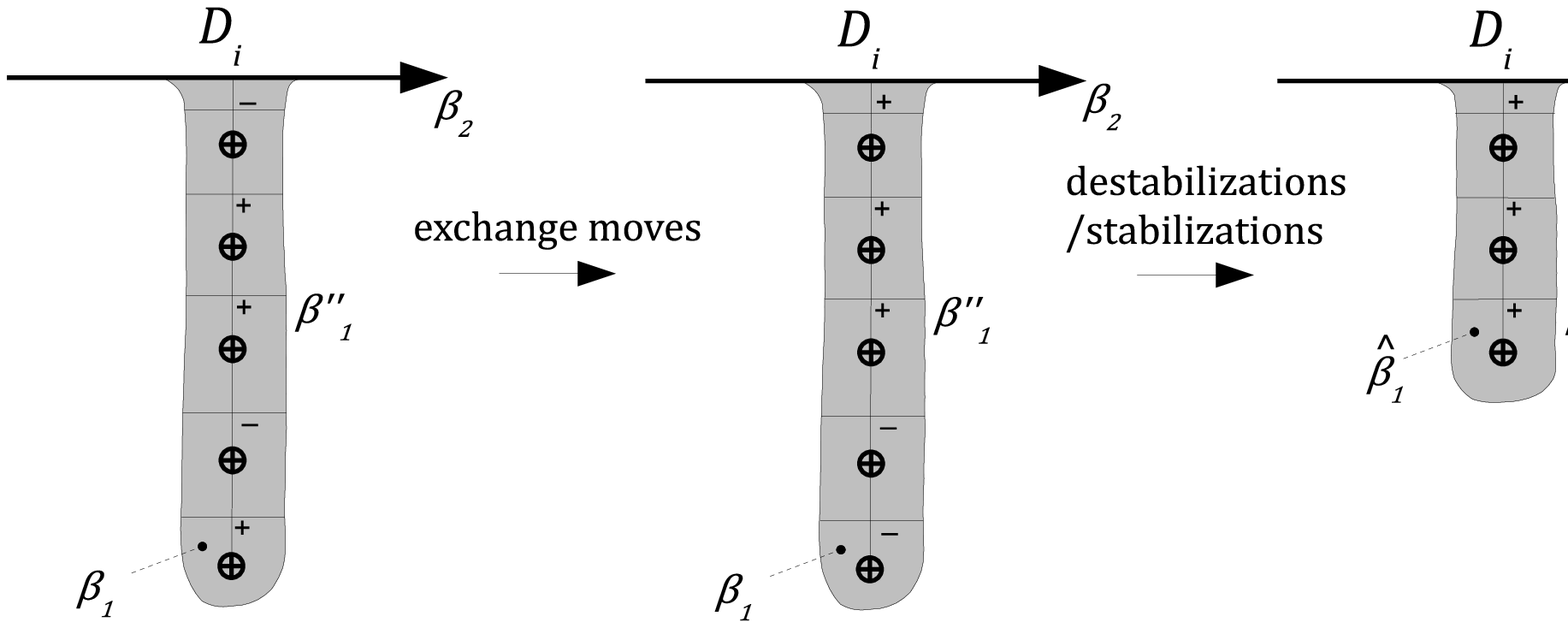}
	\caption{{\small Shown are the negative destabilizations of $\partial_i^-$ along with corresponding negative stabilizations of $\beta_1$ which yield $\hat\beta_1$.}}
	\label{fig:destabstabB}
\end{figure}

\begin{figure}[htbp]
	\centering
		\includegraphics[width=0.70\textwidth]{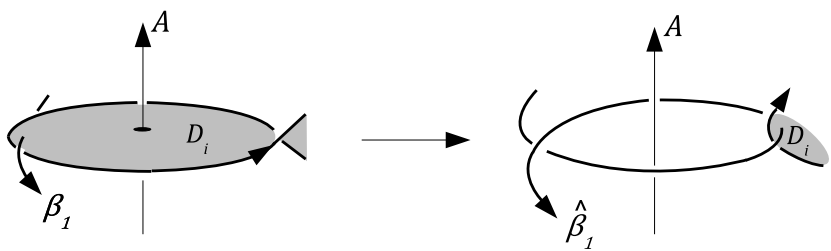}
	\caption{{\small A destabilization of the braided boundary of $D_i$ induces a stabilization of $\beta_1$.}}
	\label{fig:microflypeB}
\end{figure}

\section{Jones' conjecture}
\label{sec:jones}

Before proving Theorem \ref{thm:jones}, we recall a reformulation of the generalized Jones conjecture as described by Kawamuro \cite{[K1],[K]}.  To do so, observe that for any braid $\beta \in \mathcal{L}$ there is an ordered pair $(\ell(\beta),n(\beta))$; we then have the following definition:

\begin{definition}
\label{defn:cone}
Let $\beta \in \mathcal{L}$.  The {\em cone} of $\beta$ is defined to be $(\ell(\beta),n(\beta))$ along with all pairs $(\ell,n)$ that can be achieved by stabilizing $\beta$.
\end{definition}

In the $(\ell,n)$-plane, the cone of $\beta$ indeed has the shape of a cone; it consists of a lattice of points at or above $(\ell(\beta),n(\beta))$ which are contained within the region bounded by lines of slope +1 and -1 passing through the point $(\ell(\beta),n(\beta))$; see Figure \ref{fig:coneB}.

\begin{figure}[htbp]
	\centering
		\includegraphics[width=0.55\textwidth]{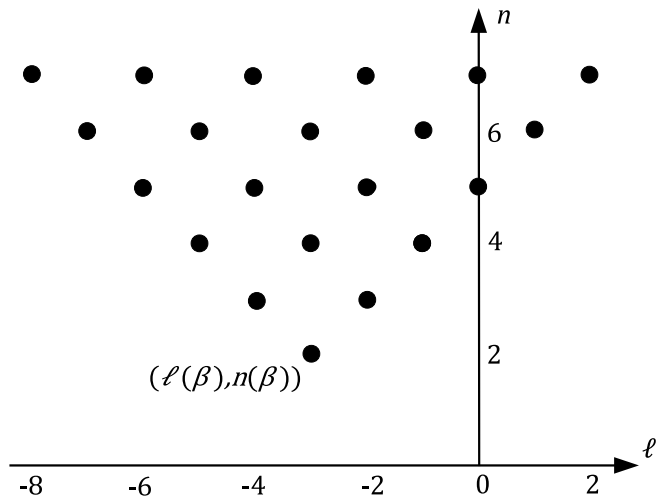}
	\caption{{\small The cone of a braid $\beta$ in the $(\ell,n)$-plane.}}
	\label{fig:coneB}
\end{figure}

An equivalent reformulation of Theorem \ref{thm:jones} due to Kawamuro is then the following proposition, which we prove.

\begin{proposition}
Let $\beta_0 \in \mathcal{L}$ be at minimum braid index for the link type $\mathcal{L}$.  If $\beta \in \mathcal{L}$, then the cone of $\beta$ is contained in the cone of $\beta_0$.
\end{proposition}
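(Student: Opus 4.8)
The plan is to first convert the cone statement into a single numerical inequality, and then feed the embedded annuli of Proposition~\ref{prop:annuli} into a self-linking computation. Since the cone of any braid is bounded by lines of slope $\pm1$ through its apex and opens in the direction of increasing $n$, the cone of $\beta$ is contained in the cone of $\beta_0$ if and only if the apex $(\ell(\beta),n(\beta))$ lies in the cone of $\beta_0$ (the standard integer cone is closed under addition, so apex containment propagates to the whole cone). By Definition~\ref{defn:cone} this apex condition is exactly
\[
\ell(\beta)-\ell(\beta_0)\le n(\beta)-n(\beta_0)\qquad\text{and}\qquad \ell(\beta_0)-\ell(\beta)\le n(\beta)-n(\beta_0),
\]
i.e.\ Theorem~\ref{thm:jones}. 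It is convenient to rewrite these as $\ell(\beta)-n(\beta)\le \ell(\beta_0)-n(\beta_0)$ and $\ell(\beta)+n(\beta)\ge \ell(\beta_0)+n(\beta_0)$. Since a negative stabilization fixes $\ell+n$ and lowers the self-linking number $\ell-n$ by $2$, while a positive stabilization fixes $\ell-n$ and raises $\ell+n$ by $2$, each of these two inequalities is tied to exactly one of the two stabilization directions appearing in Proposition~\ref{prop:annuli}.

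I would then apply Proposition~\ref{prop:annuli} with $\beta_1=\beta_0$ and $\beta_2=\beta$, producing $\hat\beta_1$ (from $\beta_0$ by $a$ negative stabilizations) and $\hat\beta_2$ (from $\beta$ by positive stabilizations and braid isotopy) cobounding $m$ embedded annuli $\mathcal{A}$. Because positive stabilization preserves the self-linking number, $\ell(\hat\beta_2)-n(\hat\beta_2)=\ell(\beta)-n(\beta)$, whereas $\ell(\hat\beta_1)-n(\hat\beta_1)=\bigl(\ell(\beta_0)-n(\beta_0)\bigr)-2a$. The crux is to extract from the annuli the bound $\ell(\hat\beta_2)-n(\hat\beta_2)\le \ell(\hat\beta_1)-n(\hat\beta_1)+2a$, which is equivalent to the first displayed inequality $\ell(\beta)-n(\beta)\le\ell(\beta_0)-n(\beta_0)$.

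To obtain this bound I would put each component of $\mathcal{A}$ in braid-foliated form and read off two signed quantities: the algebraic intersection number of the axis $A$ with $\mathcal{A}$, which equals $\operatorname{lk}(A,\hat\beta_1)-\operatorname{lk}(A,\hat\beta_2)=n(\hat\beta_1)-n(\hat\beta_2)$ and is a signed vertex count, and the companion signed count of singularities that governs the difference in algebraic length. I would simplify the foliation using the valence-$1$, $-2$, $-3$ reductions of Figure~\ref{fig:valence123B}, each of which removes vertices while only (de)stabilizing the boundary braids. The essential point — and the one place minimality enters — is that $\hat\beta_1$, obtained from the \emph{minimal} braid $\beta_0$ by negative stabilizations, can be destabilized only back down to $\beta_0$ and no further; this irreducibility prevents any further lowering of $\ell-n$ and forces the singularities crossed in passing from $\hat\beta_1$ to $\hat\beta_2$ to contribute so that the self-linking of $\hat\beta_2$ exceeds that of $\hat\beta_1$ by at most the $2a$ already absorbed into the negative stabilizations, giving the displayed inequality.

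Finally, for the second inequality $\ell(\beta)+n(\beta)\ge\ell(\beta_0)+n(\beta_0)$ I would pass to the mirror image link type $\overline{\mathcal L}$. Mirroring negates every crossing sign, hence sends $\ell\mapsto-\ell$ while fixing $n$ and preserving minimality of the braid index, so $\overline\beta,\overline{\beta_0}\in\overline{\mathcal L}$ with $\overline{\beta_0}$ still minimal. The first inequality, already established and applied to $\overline{\mathcal L}$, reads $\ell(\overline\beta)-n(\overline\beta)\le\ell(\overline{\beta_0})-n(\overline{\beta_0})$, i.e.\ $-\ell(\beta)-n(\beta)\le-\ell(\beta_0)-n(\beta_0)$, which is exactly the second inequality. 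Combining the two gives $|\ell(\beta)-\ell(\beta_0)|\le n(\beta)-n(\beta_0)$ and hence the cone containment. I expect the main obstacle to be the third paragraph: correctly bookkeeping the signs of the vertices and singularities of the foliated annuli and rigorously invoking minimality to bound the self-linking. It is precisely this use of minimality that blocks the symmetric application of Proposition~\ref{prop:annuli} with the roles of $\beta$ and $\beta_0$ reversed, which would otherwise yield the contradictory reverse inequality; so the argument must be set up to use minimality of the negatively stabilized braid in an essential, non-symmetric way.
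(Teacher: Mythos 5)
Your reductions in the first two paragraphs are fine: cone containment is equivalent to apex containment, the apex condition is the pair of inequalities on $\ell\mp n$, and feeding Proposition~\ref{prop:annuli} in with the roles transposed relative to the paper (minimal braid negatively stabilized, the other positively) is workable --- indeed, once one knows the two stabilized braids can be brought to a common $(\ell,n)$, minimality enters simply as the bound $n^*\geq n(\beta_0)$ on the common braid index, and your first inequality falls out directly; your mirror trick for the second inequality is also sound, though unnecessary, since reversing the roles of the two braids in Proposition~\ref{prop:annuli} yields the \emph{complementary} inequality rather than a ``contradictory reverse'' one, so your closing worry about the asymmetry blocking the symmetric application is misplaced (the paper itself handles the other side exactly by reversing roles). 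However, the stated mechanism by which you invoke minimality is wrong: minimality of $n(\beta_0)$ does not make $\hat\beta_1$ ``destabilizable only back down to $\beta_0$ and no further'' (destabilization sequences are not unique and need not return to $\beta_0$), and it does not ``prevent further lowering of $\ell-n$'' --- a negative destabilization \emph{raises} $\ell-n$ by $2$. All minimality can give you is $n\geq n(\beta_0)$ for every representative, applied at the end.

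The genuine gap is that the entire core of the proof --- extracting from the foliated annuli the fact that both boundary braids can be reduced, by destabilizations, exchange moves and braid isotopy, to braids with \emph{equal} $(\ell,n)$ --- is absent, and your proposed substitute (signed vertex and singularity counts plus the valence-$1$, $-2$, $-3$ reductions of Figure~\ref{fig:valence123B}) does not suffice. The paper first kills all $aa$- and $as$-tiles using the Euler characteristic inequality $V(1,1)+2V(0,2)+V(0,3)\geq 4$ (Lemma 7 of \cite{[BM]}) on the disc such a tile splits off, then applies the annulus count of Lemma 6.3.1 of \cite{[BM2]}: if $E(s)>0$ every vertex can be eliminated and the annulus is foliated entirely by $s$-arcs, giving equal $(\ell,n)$ immediately. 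The hard case is $E(s)=0$, where the simplification can stall at a standard tiling by $ab$- and $bb$-tiles in which every vertex has type $(1,2)$ or $(0,4)$ and \emph{no} valence reduction applies (Figure~\ref{fig:checkerboardB}); there the paper closes the argument with an explicit count, stabilizing one boundary $r(k+1)$ times along singular leaves to remove all negative vertices and then destabilizing $r(k+1)$ times at valence-$1$ vertices, all stabilizations and destabilizations having identical parity, so that $(\ell,n)$ of the two boundaries coincide. Without this endgame --- which you yourself flag as ``the main obstacle'' but leave unproved --- your displayed inequality has no justification, since the valence reductions alone do not terminate in an empty or $s$-arc foliation.
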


\begin{proof}
Suppose for contradiction that there is a $\beta_1 \in \mathcal{L}$ whose cone contains points outside the cone of $\beta_0$.  Then in fact $(\ell(\beta_1),n(\beta_1))$ must be outside the cone of $\beta_0$.  We assume for the moment that $(\ell(\beta_1),n(\beta_1))$ is such that $\ell(\beta_1) < \ell(\beta)$ for any $\beta$ in the cone of $\beta_0$ with $n(\beta_1)=n(\beta)$.  In other words, as we look at the cone of $\beta_0$, we see $\beta_1$ {\em to the left} of the cone of $\beta_0$ -- see Figure \ref{fig:twoconesB}.

By Proposition \ref{prop:annuli} there exists $\hat\beta_0, \hat\beta_1 \in \mathcal{L}$ such that the $m$ components of each braid pairwise cobound $m$ embedded annuli, where $\hat\beta_0$ is obtained from $\beta_0$ by positively stabilizing, and $\hat\beta_1$ is obtained from $\beta_1$ by negatively stabilizing.  As a result, we observe that $\hat\beta_1$ lies {\em to the left and outside of} the cone of $\beta_0$, and $\hat\beta_0$ lies {\em to the right and outside of} the cone for $\beta_1$; see Figure \ref{fig:twoconesB}.  Our goal in the proof of the current proposition is to use the embedded annuli to find a braid $\hat\beta_0^*$ which is obtained from $\hat\beta_0$ by destabilizations, braid isotopy and exchange moves, and a braid $\hat\beta_1^*$ which is obtained from $\hat\beta_1$ by destabilizations, braid isotopy and exchange moves, such that the cone of $\hat\beta_0^*$ equals the cone of $\hat\beta_1^*$.  Since neither braid isotopy nor exchange moves changes the algebraic length or braid index of a braid, the conclusion is that $n(\hat\beta_0^*)=n(\hat\beta_1^*) < n(\beta_0)$, which is our desired contradiction since $\beta_0$ is at minimum braid index.  It is then evident that if we begin with $\beta_1$ {\em to the right} of the cone of $\beta_0$, we can reverse the roles of $\beta_0, \beta_1$ in  Proposition \ref{prop:annuli} to achieve a similar contradiction.

\begin{figure}[htbp]
	\centering
		\includegraphics[width=0.65\textwidth]{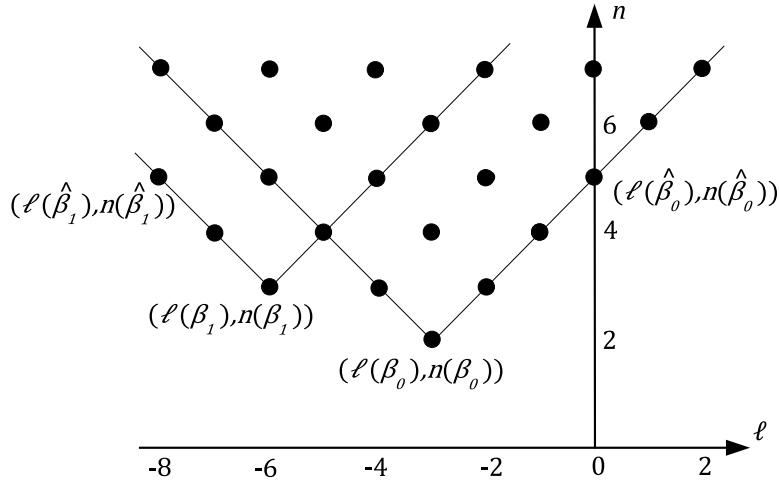}
	\caption{{\small Two cones with neither one contained in the other.}}
	\label{fig:twoconesB}
\end{figure}

So it remains to consider a representative annulus $\mathcal{A}$ cobounded by a component of $\hat\beta_0$ and a component of $\hat\beta_1$.  First, we observe that any $aa$- or $as$- tile must separate from $\mathcal{A}$ a subdisc $\Delta$ which is cobounded by the associated singular leaf and a subarc of a single component of $\partial\mathcal{A}$.  By an Euler characteristic calculation (Lemma 7 of \cite{[BM]}), we are guaranteed that $$V(1,1) + 2V(0,2) + V(0,3) \geq 4$$ where $V(\alpha,\beta)$ is the number of vertices of type $(\alpha,\beta)$ in $\Delta$.  Therefore, the tiling of $\Delta$ either contains valence-1 vertices which we can remove by destabilizing, or valence-2 or valence-3 vertices which can then be eliminated after braid isotopy and exchange moves.  We may therefore assume that we can always eliminate $aa$- or $as$- tiles.  The result then is an annulus $\mathcal{A}$ whose foliation consists of $s$-bands alternating with bigon discs $\Delta_i$ such that two $abs$-tiles serve as the transition on either end of each bigon disc.  Then by a related Euler characteristic calculation (Lemma 6.3.1 of \cite{[BM2]}) for our annulus $\mathcal{A}$, we know that $$V(1,1) + 2V(0,2) + V(0,3) = 2E(s) + V(2,1) + 2V(3,0) + \sum_{v=4}^{\infty} \sum_{\alpha = 0}^v (v+\alpha-4)V(\alpha,v-\alpha)$$ where $E(s)$ is twice the number of $s$-bands; furthermore, if both sides equal zero we know that only vertices of type $(1,2)$ or $(0,4)$ appear.  

If $E(s) > 0$ we may therefore eliminate all vertices in the foliation of $\mathcal{A}$ using braid isotopy, exchange moves and destabilizations, and we obtain an annulus whose foliation consists entirely of $s$-arcs.  If this is the case for all of our $m$ embedded annuli, then we are done, since the resulting $\hat\beta_0^*$ and $\hat\beta_1^*$ will have the same braid index and algebraic length.  

Otherwise, if $E(s)=0$ for some annulus, then again using braid isotopy, exchange moves and destabilizations we either obtain an annulus entirely foliated by $s$-arcs (in which case we are done), or an annulus whose foliation is tiled entirely by $ab$- or $bb$-tiles in which every vertex is either of type $(1,2)$ or $(0,4)$.  If any of the remaining vertices of type $(1,2)$ or $(0,4)$ are adjacent to consecutive singularities of like parity on either end of a one-parameter family of $b$-arcs, then we may perform a standard change of fibration to obtain either a vertex of type $(1,1)$ or $(0,3)$ which we may then remove following braid isotopy and exchange moves.  We may therefore assume we obtain a tiling in which consecutive singularities around any vertex alternate parity (see Figure \ref{fig:checkerboardB}).  The tiling of $\mathcal{A}$ will then be composed of a subannulus of some $r$ number of $ab$-tiles along $\hat\beta_0^*$, and a subannulus of the same $r$ number of $ab$-tiles along $\hat\beta_1^*$, along with $k$ subannuli containing $2r$ number of $bb$-tiles which interpolate between the subannuli of $ab$-tiles; in Figure \ref{fig:checkerboardB}, $r=3$ and $k=1$.  However, in this case the resulting braids $\hat\beta_0^*$ and $\hat\beta_1^*$ will then have the same braid index and algebraic length, and we achieve the desired contradiction.  To justify this last statement, observe that if the annulus $\mathcal{A}$ is oriented so as to agree with $\hat\beta_1^*$, we may stabilize $\hat\beta_1^*$ along singular leaves in $\mathcal{A}$ some $r(k+1)$ number of times to remove all negative vertices in $\mathcal{A}$, and then destabilize along valence-1 vertices the same $r(k+1)$ number of times to remove all positive vertices from $\mathcal{A}$; moreover, the parity of all of these stabilizations and destabilizations are identical, and thus $(\ell(\hat\beta_0^*),n(\hat\beta_0^*)) = (\ell(\hat\beta_1^*),n(\hat\beta_1^*))$. 

\end{proof}

\begin{figure}[htbp]
	\centering
		\includegraphics[width=0.45\textwidth]{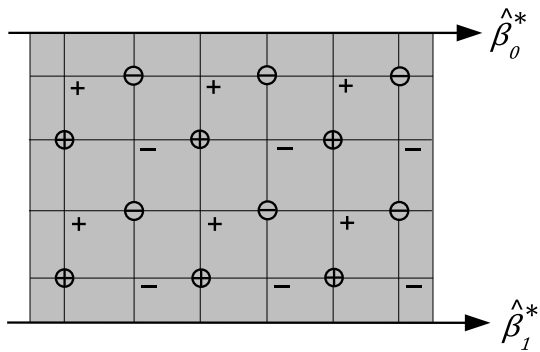}
	\caption{{\small A tiling of $\mathcal{A}$ such that all vertices are either valence-4 or valence-3, and consecutive singularities around any vertex alternate sign.}}
	\label{fig:checkerboardB}
\end{figure}








\end{document}